\newcommand{\XIS}{{\mathfrak{X}}}
\newcommand{\SO}{{\mathrm{SO}}}
\newcommand{\rr}{\rightarrow}
\newcommand{\lrr}{\longrightarrow}
\newcommand{\call}{{\cal L}}             %
\newcommand{\calri}{{{\cal R}^\xi}}             %
\newcommand{\calu}{{\cal U}}             %
\newcommand{\na}{{\nabla}}
\newcommand{\tr}[1]{{\mathrm{tr}}{#1}}
\newcommand{\End}[1]{{\mathrm{End}}\,{#1}}
\newcommand{\dx}{{\mathrm{d}}}
\newcommand{\estrela}{{\boldsymbol{\star}}}
\newcommand{\vol}{{\mathrm{vol}}}
\newcommand{\ric}{{\mathrm{Ric}}}
\newtheorem{teo}{Theorem}[section]
\newtheorem{coro}{Corollary}[section]
\newtheorem{prop}{Proposition}[section]
\def\cyclic{\mathop{\kern0.9ex{{+}
\kern-2.2ex\raise-.28ex\hbox{\Large\hbox{$\circlearrowright$}}}}\limits}
\title{New Hsiung-Minkowski identities}
\author{Rui Albuquerque}
\begin{document}

%\begin{color}{DarkOrchid4}

\maketitle

%\date{\today} %{2 dezembro 2019}

\begin{abstract}

We find the first three most general Minkowski or Hsiung-Minkowski 
identities relating the total mean curvatures $H_i$, of degrees 
$i=1,2,3$, of a closed hypersurface $N$ immersed in a given orientable 
Riemannian manifold $M$ endowed with any given vector field $P$. Then we specialise the three identities to the case when $P$ is a position vector field. We further obtain that the classical Minkowski identity is natural to all Riemannian manifolds and, moreover, that a corresponding 1st degree Hsiung-Minkowski identity holds true for all Einstein manifolds. We apply the result to hypersurfaces with constant $H_1,H_2$.

\end{abstract}

%\tableofcontents

\ 
\vspace*{3mm}\\
{\bf Key Words:} exterior differential system, hypersurface, $i$th mean 
curvature, Einstein metric.
\vspace*{2mm}\\
{\bf MSC 2020:} Primary: 53C21, 53C25, 53C65; Secondary:  53C17, 53C42, 
57R25, 58A15

\vspace*{10mm}

%\markboth{title}
\markright{\sl\hfill  R. Albuquerque \hfill}

\setcounter{section}{1}

\begin{center}
 \textbf{1. Introduction}
\end{center}
\setcounter{section}{1}

Let $M$ denote an orientable Riemannian manifold of class $\mathrm{C}^2$ and
dimension $n+1$. Let ${P}\in\XIS_\calu$ be a \textit{position vector 
field} of $M$ defined on an open subset $\calu\subset M$. This is, there 
exists a function $f\in\mathrm{C}^1_\calu$ such that, for any 
$X\in\XIS_\calu$,
\begin{equation} \label{def_positionvectorfield}
 \na_XP=f X.
\end{equation}

Let $N$ be a closed orientable isometrically immersed hypersurface of $M$ of 
class $\mathrm{C}^2$, contained in $\calu$. Let $\nu$ denote one of the two 
unit normals to $N$ in $M$ and let $H_i$ denote the $i$th mean curvature of 
$N$ with respect to $\nu$. As usual, $H_0=1$. Now we may recall the Hsiung-Minkowski identities: if $M$ has constant sectional curvature, then, for $0\leq i\leq n-1$,
\begin{equation} \label{HMI-csc}
 \int_N(f H_i -\langle {P},\nu\rangle H_{i+1})\,\vol_N=0 .
\end{equation}
This identity was found by Hsiung in \cite{Hsiung1} in the case of 
Euclidean space, with position vector field $P_x=x\in T_xM$, hence with 
$f=1$, generalizing the same result of Minkowski for $i=0$.

The Hsiung-Minkowski identities were further improved by Y.~Katsurada, in 
\cite{Katsurada}, who found certain \textit{generalized} Minkowski 
identities for a conformal-Killing vector field, in place of $P$, still on 
the ambient of a constant sectional curvature $M$ contained in Euclidean 
space. The true formula and meaning of a position vector field there remains 
a bit obscure. Nevertheless, since $P$ is also conformal-Killing, there is 
some overlap with results presented here and today. B.~Chen and K.~Yano in 
\cite{ChenYano} also gave the most general integral formulas, for closed 
submanifolds of higher codimension in Euclidean space. Regarding the other 
space forms, P.~Guan and J.~Li \cite{GuanLi} and C.~Guidi and 
V.~Martino \cite{GuidiMartino} gave independent proofs of \eqref{HMI-csc} 
recurring to Newton's identities for symmetric polynomials. In the same trend 
the new formulas of K.~Kwong \cite{Kwong,KwongLeePyo} have appeared. We 
refer 
the reader to \cite{Alb2019} for a new independent proof of \eqref{HMI-csc}. 
Only recently we found \cite{Bivens}, most probably the first to give the 
definition of position vector field and to give a proof of \eqref{HMI-csc} 
after the singular approach of \cite{Katsurada}.

In this article, in Theorem~\ref{teo_vgHMI}, we deduce the most general 
integral identities of the kind of Hsiung-Minkowski for the 
cases $i=0,1,2$. Indeed generalized, in as much as they are concerned with 
any vector field and any oriented Riemannian manifold $M$.

Given a position vector field $P$, we show in Theorem~\ref{teo_HMI} that
\begin{equation}
 \int_N(f-\langle {P},\nu\rangle H_{1})\,\vol=0
\end{equation}
and
\begin{equation}
 \int_N\Bigl(2\binom{n}{2}f H_1-2\binom{n}{2}\langle P,\nu\rangle 
H_2-\ric(P,\nu)+\langle P,\nu\rangle\ric(\nu,\nu)\Bigr)\,\vol=0
\end{equation}
and
\begin{equation}
 \begin{split}
& \int_N\Bigl(
 \bigl(\langle P,\nu\rangle\ric(\nu,\nu)-\ric(\nu,P)\bigr)nH_1
 +3\binom{n}{3}fH_2-3\binom{n}{3}\langle P,\nu\rangle H_3+ \Bigr. \\
 &\qquad\qquad \Bigl. +\langle 
P,\nu\rangle\tr{(R(\nu,\na_\cdot\nu)\nu)}+\ric(\na_P\nu,\nu)
 -\tr{(R(P,\na_\cdot\nu)\nu)}\Bigr)\,\vol =0.
  \end{split}
\end{equation}

We then find that \eqref{HMI-csc} is valid for every Riemannian manifold in 
case $i=0$, valid for every Einstein manifold in cases $i=0,1$, and, 
still as a corollary, valid for constant sectional curvature in cases 
$i=0,1,2$. Let us remark that every warped product metric admits a position 
vector field. We end the article with some applications to the theory of 
isoparametric hypersurfaces in a given Einstein manifold.

This article brings to light a new application of a fundamental differential 
system of Riemannian geometry introduced in \cite{Alb2012}. Given our 
self-contained methods in the treatment of the $i$th-mean curvatures and 
beyond, we again recommend the reading of \cite{Alb2019}.

\vspace{4mm}
\begin{center}
 \textbf{2. Recalling the fundamental differential system}
\end{center}
\setcounter{section}{2}

Our framework for the proof of the generalized integral identities is that 
of the unit tangent sphere bundle $\pi:SM\lrr M$ together with its exterior 
differential system of invariant $n$-forms on the $(2n+1)$-dimensional 
manifold $SM$. Let us start by recalling such fundamental differential 
system. We use the notation $\langle\ ,\ \rangle$ for the metric.

We take the tangent bundle $\pi:TM\lrr M$ with the canonical Sasaki metric 
and $\SO(n+1)$ structure and recall the \textit{mirror} map $B\in\End{TTM}$ 
restricted to $SM$, see \cite{Alb2012,Alb2019}. The map is characterized by 
$B\circ B=0$ and $B\pi^*X=\pi^\estrela X$, for any $X$ tangent to $M$. The 
notation is the usual: $\pi^*$ for the horizontal lift, to the pullback 
tangent bundle identified via $\dx\pi$ with horizontals in 
$TTM=\pi^*TM\oplus\pi^\estrela TM$, and $\pi^\estrela$ for the vertical 
lift to another copy of the pullback tangent bundle, identified with 
verticals $\pi^\estrela TM=\ker\dx\pi$ only due to the differential 
structure.

The tautological or canonical vector field $\xi$, that which is defined as 
the vertical $\xi_u=u\in TTM$, $\forall u\in TM$, yields a unit horizontal 
vector field $e_0$ over $SM$ and a dual 1-form $\theta=e^0$. This defines a 
well-known contact structure. Moreover, $e_0$ is the restriction to $SM$ of 
the geodesic flow vector field. Its mirror $Be_0=\xi$ is the canonical vector 
field on $TM$. Hence we have a reduction of the structural group of the 
Riemannian manifold $SM$ to $\SO(n)$.

We further recall that a frame $e_0,e_1,\ldots,e_n,e_{n+1},\ldots,e_{2n}$ is 
said to be an \textit{adapted frame} on $SM$ if: it is orthonormal, the first 
$n+1$ vectors are horizontal, the remaining are vertical and each of the 
$e_{i+n}$ is the mirror of $e_i$.

Now let us recall the differential system of $M$ found in \cite{Alb2012}. 
This is a collection of global $n$-forms $\alpha_i$ on $SM$ defined as 
follows. First we let $\alpha_n=\xi\lrcorner\pi^\estrela\vol$ be the volume 
form of the fibres. Then for each $i=0,\ldots,n$ we let  
\begin{equation} \label{definition_alphai}
  \alpha_i =\frac{1}{i!(n-i)!}\,\alpha_n\circ(B^{n-i}\wedge1^i) .
\end{equation}
$\circ$ denotes an alternating operator which is particularly efficient 
under differentiation: Leibniz rule applies in the obvious way, with no signs 
attached. It is convenient to recall here, for any manifold, any 1-forms 
$\eta_1,\ldots\eta_p$ and any endomorphisms $B_1,\ldots,B_p$, the result that
\begin{equation}
 (\eta_1\wedge\cdots\wedge\eta_p)\circ(B_1\wedge\cdots\wedge B_p) 
 =\sum_{\sigma\in\mathrm{Sym}_p} 
 \eta_1\circ B_{\sigma_1}\wedge\cdots\wedge \eta_p\circ B_{\sigma_p} ,
\end{equation}
so the reader may know precisely what we are referring.  Notice 
$\alpha_n=e^{n+1}\wedge\cdots\wedge e^{n+n}$ and we may say $i$ is the number 
of vertical directions in each summand of the $n$-form $\alpha_i$. For 
convenience one also defines 
$\alpha_{-1}=\alpha_{n+1}=0$.

Given an orientable $\mathrm{C}^2$ immersed hypersurface 
$\iota:N\hookrightarrow M$ with the induced metric, let us recall the second 
fundamental form is defined by $A=\na\nu$, where $\nu$ is one of the two 
unit-normals to $N$. Then, for $0\leq i\leq n$, the $i$th-mean curvature 
$H_i$ is defined by $\binom{n}{i}H_i$ being the elementary symmetric 
polynomial of degree $i$ on the eigenvalues $a_1,\ldots,a_n$ of $A$, the 
so-called principal curvatures of $N$. In other words, $H_0=1$ and 
$\binom{n}{i}H_i=\sum_{1\leq j_1<\cdots<j_i\leq n}a_{j_1}\cdots a_{j_i}$.

Now we have a canonical lift $\hat{\iota}:N\hookrightarrow SM$ of $\iota$ to 
$SM$, an immersion, defined by $\hat{\iota}(x)=\nu_{\iota(x)}$. Hence
\begin{equation} \label{fundapullbacktheta}
 \hat{\iota}^*\theta=0
\end{equation}
and, recurring to \cite{Alb2012},
\begin{equation}  \label{fundapullbackalphai}
 \hat{\iota}^*{\alpha_i}=\binom{n}{i}H_i\,\vol_N .
\end{equation}

The following structural identities on the unit tangent sphere bundle of $M$ are found in 
\cite[Theorem 2.4]{Alb2012}: $\forall 0\leq i\leq n$,
\begin{equation}\label{derivadasalpha_i}
\dx\alpha_i=(i+1)\,\theta\wedge\alpha_{i+1}+\calri\alpha_i 
\end{equation}
where
\begin{equation}  \label{calriinabase}
 \calri\alpha_i = \sum_{0\leq j<q\leq n}\sum_{p=0}^nR_{p0jq}\,e^{jq}\wedge 
e_{p+n}\lrcorner\alpha_i  .
\end{equation}
and $R_{lkij} = \langle R(e_i,e_j)e_k,e_l\rangle= \langle 
\na_{e_i}\na_{e_j}e_k-\na_{e_j}\na_{e_i}e_k-\na_{[e_i,e_j]}e_k,e_l\rangle$.

Letting $r=\pi^\estrela\ric(\xi,\xi)=\sum_{j=1}^nR_{j0j0}$, a function 
on $SM$ determined by the Ricci curvature of $M$, we have also from 
\cite{Alb2012} that $\calri\alpha_0=0$ and 
$\calri\alpha_{1}=-r\,\theta\wedge\alpha_0$. In other words,
\begin{equation}\label{dalphaZeroedalphaUm}
 \dx\alpha_0=\theta\wedge\alpha_{1},\qquad\quad
 \dx\alpha_{1}=2\theta\wedge\alpha_{2}-r\,\theta\wedge\alpha_0.
\end{equation}
Notice $\theta\wedge\alpha_0=\pi^*\vol_M$. Many other general formulas like 
the above are known, but not required here.

\begin{teo} \label{teo_vgHMI}
Let $\iota:N\hookrightarrow M$ be a closed orientable isometrically immersed 
$\mathrm{C}^2$ hypersurface of $M$. Let ${P}$ be \emph{any vector field} of $M$ 
defined on a neighborhood of $\iota(N)$. Then we have that
\begin{equation}\label{vgHMzero}
 \int_N\frac{1}{n!}\hat{\iota}^*(\alpha_n\circ(\call_PB^n))-\binom{n}{1}
\langle {P},\nu\rangle H_{1}\,\vol=0
\end{equation}
and
\begin{equation}\label{vgHMum}
 \int_N\frac{1}{(n-1)!}\hat{\iota}^*(\alpha_n\circ(\call_PB^{n-1} \wedge1)) 
-\Bigl(2\binom{n}{2}\langle P,\nu\rangle H_2+\ric(P,\nu)-
\langle P,\nu\rangle\ric(\nu,\nu)\Bigr)\,\vol=0
\end{equation}
and
\begin{equation}\label{vgHMdois}
 \begin{split}
& \int_N
 \frac{1}{2(n-2)!}\hat{\iota}^*(\alpha_n\circ(\call_PB^{n-2}\wedge1^2)) 
+\Bigl(\bigl(\langle P,\nu\rangle\ric(\nu,\nu)-\ric(\nu,P)\bigr)nH_1  \Bigr. \\
 &\hspace{16mm} \Bigl.-3\binom{n}{3}\langle P,\nu\rangle H_3 +\langle 
P,\nu\rangle\tr{(R(\nu,\na_\cdot\nu)\nu)} 
+\ric(\na_P\nu,\nu) -\tr{(R(P,\na_\cdot\nu)\nu)}\Bigr)\,\vol =0.
  \end{split}
\end{equation}
\end{teo}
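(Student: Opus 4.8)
The plan is to obtain all three identities from a single mechanism: Cartan's magic formula applied to the invariant $n$-forms $\alpha_i$ along a natural lift of $P$, combined with Stokes' theorem on the closed hypersurface $N$. Let $\tilde P$ denote the lift of $P$ to $SM$ induced by the flow of $P$ on $M$ and rendered tangent to $SM$ (by subtracting the appropriate radial multiple of $\xi=Be_0$), and keep the notation $\call_P=\call_{\tilde P}$. For each $i$, Cartan's formula gives $\call_P\alpha_i=\dx(\tilde P\lrcorner\alpha_i)+\tilde P\lrcorner\dx\alpha_i$; pulling back by $\hat\iota$ and integrating over $N$, the summand $\hat\iota^*\dx(\tilde P\lrcorner\alpha_i)=\dx\,\hat\iota^*(\tilde P\lrcorner\alpha_i)$ is exact and drops out by Stokes, leaving
\[
 \int_N\hat\iota^*\call_P\alpha_i=\int_N\hat\iota^*(\tilde P\lrcorner\dx\alpha_i),\qquad i=0,1,2.
\]
The whole proof is then the evaluation of the two sides for each $i$.

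For the left-hand side I would expand $\call_P\alpha_i$ by the sign-free Leibniz rule for the $\circ$-product, using the definition \eqref{definition_alphai} and $\call_P1=0$. This splits $\call_P\alpha_i$ into the term $\frac{1}{i!(n-i)!}\alpha_n\circ(\call_PB^{n-i}\wedge1^i)$ that appears in the statement, plus a remainder $\frac{1}{i!(n-i)!}(\call_P\alpha_n)\circ(B^{n-i}\wedge1^i)$ coming from the Lie derivative of the fibre volume form $\alpha_n$. For the right-hand side I would insert the structural identity \eqref{derivadasalpha_i}, namely $\dx\alpha_i=(i+1)\theta\wedge\alpha_{i+1}+\calri\alpha_i$, and contract with $\tilde P$. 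Since $\hat\iota^*\theta=0$ by \eqref{fundapullbacktheta}, and since along $\hat\iota$ the tautological direction is $\nu$ so that $\tilde P\lrcorner\theta$ restricts to $\langle P,\nu\rangle$, the $\theta$-part contributes exactly $(i+1)\langle P,\nu\rangle\,\hat\iota^*\alpha_{i+1}=(i+1)\binom{n}{i+1}\langle P,\nu\rangle H_{i+1}\,\vol$ by \eqref{fundapullbackalphai}; this is the $H_{i+1}$ term in each of \eqref{vgHMzero}--\eqref{vgHMdois}. Everything that remains — the fibre-volume remainder and the curvature contraction $\tilde P\lrcorner\calri\alpha_i$ — must assemble into the stated $\ric$- and $R$-terms.

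The case $i=0$ is then immediate: $\calri\alpha_0=0$ by \eqref{dalphaZeroedalphaUm}, the remainder vanishes after $\circ B^{n}$ and pullback, and one reads off \eqref{vgHMzero}. For $i=1$ I would use $\calri\alpha_1=-r\,\theta\wedge\alpha_0$ from \eqref{dalphaZeroedalphaUm}; contracting and pulling back sends $r$ to $\ric(\nu,\nu)$ and yields $-\langle P,\nu\rangle\ric(\nu,\nu)$, while the fibre-volume remainder, now carrying one retained vertical slot, produces the $\ric(P,\nu)$ term, giving \eqref{vgHMum}. The decisive and heaviest step is $i=2$: here $\calri\alpha_2$ is not recorded above, so I would first compute it from the definition \eqref{calriinabase}, then contract with the horizontal and vertical parts of $\tilde P$ and pull back along $\hat\iota$. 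The pullback of the vertical contractions $e_{p+n}\lrcorner\alpha_2$ passes through the shape operator $A=\na\nu$ — the same mechanism behind \eqref{fundapullbackalphai} — and this is what converts the index sums $\sum R_{p0jq}\cdots$ into the traces $\tr{(R(\nu,\na_\cdot\nu)\nu)}$, $\tr{(R(P,\na_\cdot\nu)\nu)}$ and the curvature coefficients of $nH_1$, whereas the vertical component of $\tilde P$, which encodes $\na P$, together with the fibre-volume remainder, produces $\ric(\na_P\nu,\nu)$.

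The hardest part is therefore the $i=2$ bookkeeping: computing $\calri\alpha_2$ explicitly, carrying the horizontal/vertical decomposition of $\tilde P$ and the radial correction needed to keep it tangent to $SM$, and recognising the resulting contractions of $R$ — worked out in an adapted frame $e_0,\dots,e_{2n}$ — as the invariant quantities $\ric$, $\tr{(R(\nu,\na_\cdot\nu)\nu)}$ and $\tr{(R(P,\na_\cdot\nu)\nu)}$. I expect the cancellation of the $H_3$ contributions and the interplay between the fibre-volume remainder and the curvature contraction to be where all the care is required; the remainder of the argument is the systematic use of \eqref{fundapullbacktheta}, \eqref{fundapullbackalphai}, and the Leibniz rule for $\circ$.
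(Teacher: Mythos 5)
There is a genuine gap, and it lies in your choice of lift. You take $\tilde P$ to be the flow-induced (complete) lift of $P$, corrected by a radial multiple of $\xi$ to make it tangent to $SM$. The paper instead uses the \emph{horizontal} lift $\pi^*P$, which is automatically tangent to $SM$ (parallel transport preserves the norm, so no radial correction is needed) and, crucially, satisfies $P\lrcorner\alpha_n=0$ because $\alpha_n$ is purely vertical. That single fact is what makes the Leibniz expansion clean: $\call_P\alpha_n=\dx(P\lrcorner\alpha_n)+P\lrcorner\calri\alpha_n=P\lrcorner\calri\alpha_n$, so the ``fibre-volume remainder'' is exactly $(P\lrcorner\calri\alpha_n)\circ(B^{n-i}\wedge 1^i)$, with no exact term trapped inside the $\circ$-product. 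With your $\tilde P$, the vertical component (encoding $\na P$, plus the radial correction) gives $\tilde P\lrcorner\alpha_n\neq 0$, so the expansion contains $\bigl(\dx(\tilde P\lrcorner\alpha_n)\bigr)\circ(B^{n-i}\wedge 1^i)$, which is \emph{not} exact and does not drop out under Stokes; your assertion that ``everything that remains must assemble into the stated terms'' is therefore unjustified and, as far as I can see, false without further cancellations you never exhibit. Moreover, the term $\hat\iota^*(\alpha_n\circ(\call_PB^{n-i}\wedge 1^i))$ in the statement means the horizontal lift (the paper's subsequent proposition gives $(\call_PB)Y=B\na^*_YP$ precisely for that lift, which is what feeds Theorem~\ref{teo_HMI} via $\call_PB=fB$); since $\call_{\tilde P}B\neq\call_{\pi^*P}B$ in general, even the ``black-box'' term you would produce is a different object from the one in \eqref{vgHMzero}--\eqref{vgHMdois}. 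A telltale symptom is your attribution of $\ric(\na_P\nu,\nu)$ to the vertical component of $\tilde P$: in the actual proof that term arises from $\hat\iota^*(P\lrcorner\calri\alpha_2)$ with $P$ purely horizontal, via the pullback identity $\hat{\iota}^*e^{k+n}=\sum_b A^k_b e^b$ converting vertical slots into the shape operator — no vertical component of the lift is involved at all.

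Apart from the lift, your skeleton does coincide with the paper's: two evaluations of $\call_P\alpha_i$ (Leibniz on the definition of $\alpha_i$ versus Cartan's formula with the structure equation \eqref{derivadasalpha_i}), equated, pulled back by $\hat\iota$, and integrated with Stokes; and your reading of the $\theta$-term as $(i+1)\binom{n}{i+1}\langle P,\nu\rangle H_{i+1}\vol$ and of the cases $i=0,1$ via \eqref{dalphaZeroedalphaUm} is correct. But note that the $i=2$ case — where the theorem's actual content sits, namely the precise coefficients and signs of $\tr{(R(\nu,\na_\cdot\nu)\nu)}$, $\tr{(R(P,\na_\cdot\nu)\nu)}$, $\ric(\na_P\nu,\nu)$ and the $nH_1$ terms — is only forecast in your proposal, not executed: you would need to compute $\calri\alpha_2$ from \eqref{calriinabase}, contract, and push through the adapted frame, as the paper does over roughly two pages. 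Fixing the proof requires (a) replacing $\tilde P$ by the horizontal lift, which simultaneously kills the radial correction, the vertical bookkeeping, and the spurious non-exact remainder, and (b) actually carrying out the $i=2$ frame computation.
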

\begin{proof}
Regarding $\call_PB$, notice we are denoting also by $P$ the horizontal lift $\pi^*P$ of $P$ 
to $SM$. Moreover, the horizontal lift $P$ is tangent to $SM$, so we can apply Lie derivative 
to any vector bundle sections restricted to $SM$.

As with $\hat{\iota}^*\theta=0$, it will become necessary to see, what is 
quite immediate, that
\begin{equation} \label{pullbacksfundas}
  \hat{\iota}^*(P\lrcorner\theta)=\hat{\iota}^*(\theta(P))=\langle 
\nu,P\rangle 
 ,\qquad\qquad \hat{\iota}^*r=\ric(\nu,\nu) .   
\end{equation}

Now, applying the well established Leibniz rule, cf. \cite{Alb2012}, on 
\eqref{derivadasalpha_i} and noticing $P\lrcorner\alpha_n=0$, since $P$ is
horizontal, on the one hand we have
 \begin{align*}
  \call_{P}\alpha_i &= 
  \frac{1}{i!(n-i)!}\Bigl((\call_{P}\alpha_n)\circ(B^{n-i}\wedge1^i) + 
 \alpha_n\circ(\call_{P}B^{n-i}\wedge1^i) \Bigr) \\ 
  &= 
\frac{1}{i!(n-i)!}\,({P}\lrcorner\calri\alpha_n)\circ(B^{n-i}\wedge1^i)+  
\frac{1}{i!(n-i)!}\,\alpha_n\circ(\call_PB^{n-i}\wedge1^i) .
 \end{align*}
And, by Cartan's formula again, on the other hand we have
 \begin{align*}
\call_{P}\alpha_i &  =\dx(P\lrcorner\alpha_i)+ 
P\lrcorner\bigl((i+1)\theta\wedge\alpha_{i+1}+\calri\alpha_i\bigr) \\
 &=\dx(P\lrcorner\alpha_i)+ (i+1)\langle P,\nu\rangle\alpha_{i+1} 
-(i+1)\theta\wedge P\lrcorner\alpha_{i+1} +P\lrcorner\calri\alpha_i.  
 \end{align*}
 
The identities in (\ref{vgHMzero}--\ref{vgHMdois}) arise as the difference 
between the two forms above after applying the pullback by $\hat{\iota}:N\rr 
SM$ and, of course, taking integrals and applying Stokes theorem. Therefore, 
we shall firstly study the two expressions 
$(P\lrcorner\calri\alpha_{n})\circ(B^{n-i}\wedge1^i)$ and 
$P\lrcorner\calri\alpha_i$ and then take their difference in the three cases.
 
Let us start by the remark that if $T_{ij}$ is skew-symmetric in $i,j$ and 
$P=\sum P_ke_k$, then 
 \[ P\lrcorner\sum_{0\leq i<j\leq n}T_{ij}e^{i}\wedge e^j
 =\frac{1}{2}P\lrcorner\sum_{i,j=0}^n T_{ij}e^{i}\wedge e^j 
=\frac{1}{2}\sum_{i,j=0}^nT_{ij}(P_ie^{j}-P_je^i) 
=\sum_{i,j=0}^nT_{ij}P_ie^{j}  .  \]
 
Now, since the given $P$ is horizontal, $P\lrcorner e_{p+n}\lrcorner\alpha_n=0$. We notice 
$e^{j+n}\circ B=e^j$ and $e^j\circ B=0$. Henceforth 
\begin{equation}   \label{importantcomputation}
 \begin{split}
 \lefteqn{ (P\lrcorner\calri\alpha_n)\circ(B^{n-i}\wedge1^i)=  \sum_{0\leq 
j<q\leq n}\sum_{p=0}^n(R_{p0jq}\,P\lrcorner e^{jq}\wedge 
e_{p+n}\lrcorner\alpha_n)\circ(B^{n-i}\wedge1^i) } \hspace*{12mm} \\
  &= \sum_{p,j,q=0}^nR_{p0jq}P_j\,e^{q}\wedge e_{p+n}\lrcorner 
(e^{n+1}\wedge\cdots \wedge e^{n+n})  \circ(B^{n-i}\wedge1^i) \\
  &= \sum_{p,j=0}^n\Bigl( R_{p0j0}P_j\,e^{1+n}\wedge\cdots 
e^{0}\wedge\cdots\wedge e^{2n}   \\ 
 & \hspace*{20mm} +\sum_{q=1}^nR_{p0jq}P_j\,e^{1+n}\wedge\cdots 
e^{q}\wedge\cdots\wedge e^{2n}\Bigr)\circ(B^{n-i}\wedge1^i) ,
 \end{split}
\end{equation}
of course, with $e^0,e^q$ in the last line in respective positions $p$.

For $i=0$, we have $(P\lrcorner\calri\alpha_{n})\circ B^n=0$, because there  
is a factor $e^{j}$, and any $e^j\circ B=0$, and $B$ will certainly be 
everywhere after applying $\circ$. Since $\calri\alpha_0=0$ by 
\eqref{dalphaZeroedalphaUm}, the seeked differential identity is just
\[ \frac{1}{n!}\alpha_n\circ(\call_PB^n)= 
\dx(P\lrcorner\alpha_0)+\theta(P)\alpha_1-\theta\wedge P\lrcorner\alpha_1 .\]
Taking pullbacks and recalling 
(\ref{fundapullbacktheta},\ref{fundapullbackalphai},\ref{pullbacksfundas}), 
the first identity \eqref{vgHMzero} follows.

For $i=1$, since $e^0\circ B=0$ and $e^q\circ B=0$, the computation above 
continues as
\begin{equation*}  
\begin{split}
& \ \sum_{p,j=0}^n\Bigl( R_{p0j0}P_j\,e^{1+n}\wedge\cdots 
e^{0}\wedge\cdots\wedge e^{2n}    +\sum_{q=1}^nR_{p0jq}P_j\,e^{1+n}\wedge\cdots 
e^{q}\wedge\cdots\wedge e^{2n}\Bigr)\circ(B^{n-1}\wedge1) \\
& = (n-1)!\sum_{p,j=0}^nR_{p0j0}P_j\,e^{1}\wedge\cdots  
e^{0}\wedge\cdots\wedge e^{n} +(n-1)!\sum_{p,j=0}^n 
R_{p0jp}P_j\,e^{1}\wedge\cdots e^{p}\wedge\cdots\wedge e^{n}
\end{split}
\end{equation*}
and taking the pullback by $\hat{\iota}$ we find just 
$-(n-1)!\,\ric(P,\nu)\,\vol_N$. By \eqref{pullbacksfundas} and the 
second formula in \eqref{dalphaZeroedalphaUm}, we have
\[ \hat{\iota}^*(P\lrcorner\calri\alpha_1)=-\hat{\iota}^*(P\lrcorner 
r\,\theta\wedge\alpha_0)=-\ric(\nu,\nu)\langle P,\nu\rangle\,\vol_N .\]
Putting the terms in equation as explained earlier, we find
\begin{align*}
 \lefteqn{-\frac{(n-1)!}{(n-1)!}\ric(P,\nu)\,\vol_N+\frac{1}{(n-1)!}
\hat{\iota}^*(\alpha_n\circ(\call_PB^{ n-1}\wedge1)) }\\
 & \hspace*{30mm} = \hat{\iota}^*\dx(P\lrcorner\alpha_1)+2\binom{n}{2}\langle 
P,\nu\rangle H_2\,\vol_N-\langle P,\nu\rangle\ric(\nu,\nu)\,\vol_N
\end{align*}
or $\dx(\hat{\iota}^*(P\lrcorner\alpha_1))=$
\[ = \frac{1}{(n-1)!}\hat{\iota}^*(\alpha_n\circ(\call_PB^{n-1}\wedge1)) 
+\Bigl(\langle P,\nu\rangle\ric(\nu,\nu)-\ric(P,\nu) -2\binom{n}{2}\langle 
P,\nu\rangle H_2\Bigr)\vol_N, \]
which is the same as identity \eqref{vgHMum} before the integral over the hypersurface $N$ with $\partial N=\emptyset$.

For $i=2$, once we find
$\frac{1}{2(n-2)!}\hat{\iota}^*((P\lrcorner\calri\alpha_n)\circ(B^{n-2}
\wedge1^2))$ and $\hat{\iota}^*(P\lrcorner\calri\alpha_2)$ the proof will be 
almost finished. 
Again we continue from the above formula, 
\eqref{importantcomputation}. Since $\hat{\iota}^*e^0=0$ and $e^0\circ B=0$, 
we have, with $e^q$ in position $p$,
\begin{equation*}
 \begin{split}
\lefteqn{ \hat{\iota}^*((P\lrcorner\calri\alpha_n)\circ(B^{n-2}\wedge1^2)) } 
\\
 &= \hat{\iota}^*\sum_{p,j=0}^n\sum_{q=1}^nR_{p0jq}P_j(e^{1+n}\wedge\cdots 
\wedge e^{p-1+n}\wedge e^{q}\wedge e^{p+1+n} \wedge\cdots\wedge 
e^{2n})\circ(B^{n-2}\wedge1^2) \\ 
&= \hat{\iota}^*\sum_{p,j=0}^n\sum_{q=1}^nR_{p0jq}P_j 
\sum_{\sigma\in\mathrm{Sym}_n} e^{1+n} 
\circ B_{\sigma_1}\wedge\cdots\wedge e^{q}\circ B_{\sigma_p} 
\wedge\cdots\wedge e^{2n}\circ B_{\sigma_n} \\
 &= 2\hat{\iota}^*\sum_{p,j=0}^n\sum_{q=1}^n\sum_{k=1,\,k\neq 
p}^nR_{p0jq}P_j \sum_{\sigma\in\mathrm{Sym}_n:\:\sigma_k=n-1,\, \sigma_p=n} 
e^{1+n}\circ B \wedge\cdots\wedge 
e^{n+k}\wedge\cdots\wedge e^{q}\wedge\cdots\wedge e^{2n}\circ B
  \end{split}
\end{equation*}
where, applying the definition, we have assumed $B_1=\ldots=B_{n-2}=B$ and 
$B_{n-1}=B_n=1$, and where $e^{n+k}$ is in position $k$ and $e^q$ in position 
$p$. Notice the factor 2, which accounts for the permutations such that 
$\sigma_k=n,\,\sigma_p=n-1$. Resuming with the computation,
\begin{equation*}
 \begin{split}
  &= 2(n-2)!\,\hat{\iota}^*\sum_{p,j=0}^n\sum_{q=1}^n\sum_{k=1,\,k\neq p}^n 
 R_{p0jq}P_j\bigl((e^{1}\wedge\cdots\wedge e^{n+k}\wedge\cdots\wedge e^{k} 
\wedge\cdots\wedge e^{n})\delta_{qk} + \bigr.\\
  & \hspace{52mm} \bigl. +(e^{1}\wedge\cdots\wedge e^{n+k}\wedge\cdots\wedge 
e^{p}\wedge\cdots\wedge e^{n})\delta_{qp} \bigr) \\
  &= 2(n-2)!\,\hat{\iota}^*\sum_{p,j=0}^n\sum_{k=1,\,k\neq p}^n  
\bigl(R_{p0jk}P_j e^{1} \wedge\cdots\wedge e^{n+k}\wedge\cdots\wedge 
e^{k}\wedge\cdots\wedge e^{n} + \bigr. \\
 & \hspace{52mm} \bigl. +R_{p0jp}P_je^{1}\wedge\cdots\wedge 
e^{n+k}\wedge\cdots\wedge e^{p}\wedge\cdots\wedge e^{n} \bigr) .
  \end{split}
\end{equation*}
Let $A^k_b=\langle \na_b\nu,e_k\rangle$ denote the matrix of the map 
$\na_\cdot\nu$ in the present frame. Then we recall 
$\hat{\iota}^*e^k=e^k_{|N}$ and $\hat{\iota}^*e^{k+n}=\sum_b A^k_be^b_{|N}$. 
So the formula above yields
\begin{align*}  \label{equacaodePRxialfanedepoisosB}
\frac{1}{2(n-2)!}\hat{\iota}^*((P\lrcorner\calri\alpha_n)\circ(B^{n-2}
\wedge1^2)) & 
 = \sum_{p,j=0}^n\sum_{k=1,\,k\neq p}^n (-R_{p0jk}P_jA^k_p 
+R_{p0jp}P_jA^k_k)\vol_N  \\
 & = \sum_{p,j=0}^n\sum_{k=1}^n(-R_{p0jk}P_jA^k_p+R_{p0jp} P_jA^k_k)\vol_N 
\\  & = -\bigl(\tr{R(P,\na_\cdot\nu)\nu}+nH_1\ric(\nu,P)\bigr)\vol_N.
\end{align*}

Next we compute $\hat{\iota}^*(P\lrcorner\calri\alpha_2)$. First notice 
$e^{jq}\wedge 
e_{p+n}\lrcorner\alpha_2=$
\begin{align*}
  &=e^{jq}\wedge e_{p+n}\lrcorner\frac{2}{2(n-2)!}\sum_{k=1,\,k\neq 
p}^n\ \sum_{\sigma:\,\sigma_k=n-1,\,\sigma_p=n} e^1\wedge\cdots\wedge 
e^{k+n}\wedge\cdots\wedge e^{p+n}\wedge\cdots\wedge e^n   \\
  &=  e^j\wedge\sum_{k=1,\neq p}^n e^1\wedge\cdots\wedge 
e^{k+n}\wedge\cdots\wedge e^{q}\wedge\cdots\wedge e^n  \\
  &=  (1-\delta_{qp}) e^j\wedge e^1\wedge\cdots\wedge e^{q+n} 
\wedge\cdots\wedge e^{q}\wedge\cdots\wedge e^n+  \\ 
 &\hspace{30mm}+\delta_{qp}\sum_{k=1,\neq p}^n e^j\wedge 
e^1\wedge\cdots\wedge e^{k+n}\wedge\cdots\wedge e^{p}\wedge\cdots\wedge e^n
\end{align*}
with $e^{k+n}$ and $e^q$ in position $k$ and $p$ respectively. Hence for 
$j=0$ we have
\begin{align*}
 \lefteqn{\hat{\iota}^*(P\lrcorner( e^{0q}\wedge 
e_{p+n}\lrcorner\alpha_2))=}\\
  &=(1-\delta_{qp})P_0A^q_p e^1\wedge\cdots\wedge e^p\wedge\cdots\wedge 
e^q\wedge\cdots\wedge e^n+  \\ &\hspace{30mm}+
 \delta_{qp}P_0\sum_{k=1,\neq p}^n  A^k_k e^1\wedge\cdots\wedge 
e^k\wedge\cdots\wedge e^{p}\wedge\cdots\wedge e^n \\
  &=  (-(1-\delta_{qp})P_0A^q_p +\delta_{qp}P_0\sum_{k=1,\neq p}^n  
 A^k_k)\vol_N =(-P_0A^q_p +\delta_{qp}nP_0H_1)\vol_N .
\end{align*}
Now we look again the previous computation and its summands when $j=p$ and 
$j\neq p$. Then, for $0<j$, we find that
\begin{align*}
  e^{jq}\wedge e_{p+n}\lrcorner\alpha_2 & =  
\delta_{jp}(1-\delta_{qp}) e^j\wedge e^1\wedge\cdots\wedge 
e^{q+n}\wedge\cdots\wedge e^q\wedge\cdots\wedge e^n+ \\ & \hspace{17mm} 
(1-\delta_{jp}) \delta_{qp} e^j\wedge e^1\wedge\cdots\wedge 
e^{j+n}\wedge\cdots\wedge e^{p} \wedge\cdots\wedge e^n 
\\ &=  \delta_{jp} (1-\delta_{qp}) e^{q+n}\wedge e^1\wedge\cdots\wedge 
e^q\wedge\cdots\wedge e^p\wedge\cdots\wedge e^n-  \\  &
  \hspace{17mm} (1-\delta_{jp}) \delta_{qp} e^{j+n}\wedge 
e^1\wedge\cdots\wedge e^j\wedge\cdots\wedge e^{p}\wedge\cdots\wedge e^n   
\end{align*}
and thus, since $P$ is horizontal,
\begin{align*}
\lefteqn{\hat{\iota}^*(P\lrcorner(e^{jq}\wedge e_{p+n}\lrcorner\alpha_2))=}\\
& =\delta_{jp}(1-\delta_{qp})\bigl(-P_1A^q_1e^{1\cdots n}+ 
P_2A^q_2e^{213\cdots n} -\cdots- (-1)^qP_qA^q_qe^{q12\cdots(q-1) (q+1)\cdots 
n}\cdots\bigr) - \bigl(\,\ldots\,\bigr) \\
 & = \sum_{l=1}^n\bigl(-\delta_{jp}(1-\delta_{qp})P_lA^q_l+
\delta_{qp}(1-\delta_{jp})P_lA^j_l\bigr)\vol_N .
 \end{align*}
In sum, 
\begin{align*}
 \lefteqn{ \hat{\iota}^*(P\lrcorner\calri\alpha_2)  
  =\hat{\iota}^*\bigl(P\lrcorner \sum_{0\leq j< q\leq n} 
 \sum_{p=1}^nR_{p0jq}e^{jq}\wedge e_{p+n}\lrcorner\alpha_2\bigr) } \\
  &=\Bigl(\sum_{q=1}^n nR_{q00q}P_0H_1-\sum_{q,p=1}^nR_{p00q}P_0A^q_p+
  \sum_{1\leq j< q\leq n}\sum_{l=1}^n(R_{q0jq}P_lA^j_l-R_{j0jq}P_lA^q_l)
  \Bigr)\vol_N \\  &= 
\Bigl(-nP_0H_1\ric(\nu,\nu)-P_0\sum_{q,p=1}^nR_{p00q}A^q_p+
  \sum_{j,q=1}^n\sum_{l=1}^nR_{q0jq}P_lA^j_l\Bigr)\vol_N \\
  &= \Bigl(-nP_0H_1\ric(\nu,\nu)-P_0\sum_{q=1}^n\langle 
  R(\nu,\na_p\nu)\nu,e_p\rangle  +\sum_{q=1}^n\langle 
R(\na_P\nu,e_q)\nu,e_q\rangle\Bigr)\vol_N \\
  &= -\bigl(nH_1\langle P,\nu\rangle\ric(\nu,\nu) + \langle P,\nu\rangle 
\tr{R(\nu,\na_\cdot\nu)\nu}+\ric(\na_P\nu,\nu)\bigr)\vol_N .
\end{align*}

Finally we have the identity from the two computations for 
$\hat{\iota}^*(\call_P\alpha_2)$:
\begin{align*} 
 & \frac{1}{2(n-2)!}\hat{\iota}^*((P\lrcorner\calri\alpha_n)\circ(B^{n-2}
\wedge1^2))  + 
\frac{1}{2(n-2)!}\hat{\iota}^*(\alpha_n\circ(\call_PB^{n-2}\wedge1^2))  \\
 & \hspace*{45mm} = \hat{\iota}^*\dx(P\lrcorner\alpha_2) + 3\langle P,\nu\rangle\binom{n}{3}H_3\vol_N 
+ \hat{\iota}^*(P\lrcorner\calri\alpha_2) ,
\end{align*}
which is equivalent to $\dx\hat{\iota}^*(P\lrcorner\alpha_2)=$
\begin{align*} 
 & = -\bigl(\tr{R(P,\na_\cdot\nu)\nu}+nH_1\ric(\nu,P)\bigr)\vol_N  + 
\frac{1}{2(n-2)!}\hat{\iota}^*(\alpha_n\circ(\call_PB^{n-2}\wedge1^2))  \\
 & \ \ \ +\Bigl(nH_1\langle P,\nu\rangle\ric(\nu,\nu) + \langle 
P,\nu\rangle \tr{R(\nu,\na_\cdot\nu)\nu}+\ric(\na_P\nu,\nu)- 3\langle 
P,\nu\rangle\binom{n}{3}H_3\Bigr)\vol_N  ,
\end{align*}
and hence immediately yields formula \eqref{vgHMdois} when $N$ has empty boundary.
\end{proof}

\begin{prop}
  For any vector field $P$ on $M$, lifted as horizontal vector field on $TM$, 
we have
  \begin{equation}
    (\call_PB)Y=B\na^*_YP,\quad\forall Y\in TTM .
  \end{equation}
\end{prop}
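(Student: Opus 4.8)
The plan is to use that $\call_{\pi^*P}B$ is again a $(1,1)$-tensor on $TM$ (restricted to $SM$), so it suffices to evaluate $(\call_{\pi^*P}B)Y$ on a horizontal lift $Y=\pi^*Z$ and on a vertical lift $Y=\pi^\estrela Z$, for an arbitrary vector field $Z$ on $M$; these span $TTM$ pointwise, and tensoriality guarantees the result depends only on $Y$ at the point. I would start from the tensorial expression $(\call_{\pi^*P}B)Y=[\pi^*P,BY]-B[\pi^*P,Y]$ and feed in the two defining properties of the mirror map, namely $B\pi^*Z=\pi^\estrela Z$ and $B\pi^\estrela Z=0$.

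The computation rests on the standard Lie-bracket formulas for the Sasaki structure: for vector fields $X,Z$ on $M$ one has, at $u\in SM$, $[\pi^*X,\pi^*Z]=\pi^*[X,Z]-\pi^\estrela(R(X,Z)u)$, $[\pi^*X,\pi^\estrela Z]=\pi^\estrela(\na_XZ)$ and $[\pi^\estrela X,\pi^\estrela Z]=0$. For $Y=\pi^*Z$, so that $BY=\pi^\estrela Z$, this gives $(\call_{\pi^*P}B)\pi^*Z=\pi^\estrela(\na_PZ)-B\bigl(\pi^*[P,Z]-\pi^\estrela(R(P,Z)u)\bigr)$. The decisive point is that the curvature term is vertical, hence annihilated by $B$, while $B\pi^*[P,Z]=\pi^\estrela[P,Z]$; using torsion-freeness in the form $[P,Z]=\na_PZ-\na_ZP$, the $\na_PZ$ contributions cancel and only $\pi^\estrela(\na_ZP)$ survives. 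For the vertical case $Y=\pi^\estrela Z$ we have $BY=0$, so $(\call_{\pi^*P}B)\pi^\estrela Z=-B\pi^\estrela(\na_PZ)=0$.

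It then remains to read off the right-hand side. Interpreting $\na^*$ as the natural (pullback) connection acting on $P$ viewed as the horizontal section $\pi^*P$, one has $\na^*_{\pi^*Z}P=\pi^*(\na_ZP)$ and $\na^*_{\pi^\estrela Z}P=0$, the latter because $\pi^\estrela Z$ projects to zero under $\dx\pi$ and $P$ is the pullback of a field on $M$. Applying $B$ then yields $B\na^*_{\pi^*Z}P=\pi^\estrela(\na_ZP)$ and $B\na^*_{\pi^\estrela Z}P=0$, which matches the two values computed above and establishes the identity on all of $TTM$.

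I expect the main obstacle to be the curvature term in the horizontal--horizontal bracket: the whole statement hinges on this term being vertical, so that $B$ kills it and no curvature of $M$ appears in the final formula. A secondary, essentially bookkeeping, difficulty is pinning down the sign conventions so that the Sasaki bracket formulas and the chosen meaning of $\na^*$ are mutually consistent, and confirming that $\na^*_{\pi^\estrela Z}P$ indeed vanishes precisely because $P$ carries no vertical component.
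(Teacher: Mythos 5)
Your proof is correct, but it takes a genuinely different route from the paper's. The paper's proof is a one-line specialization: it quotes Proposition 2.1 of \cite{Alb2019}, which asserts $(\call_PB)Y=B\na^*_YP-\na^*_{BY}P$ for an \emph{arbitrary} vector field $P$ on $TM$, and then kills the correction term by noting that $BY$ is always vertical while the horizontal lift is a pullback section, so $\na^*_{BY}P=0$. You instead re-prove the needed special case from first principles: using tensoriality of $\call_PB$ to test on horizontal and vertical lifts, and feeding the Sasaki bracket identities $[\pi^*P,\pi^*Z]=\pi^*[P,Z]-\pi^\estrela(R(P,Z)u)$, $[\pi^*P,\pi^\estrela Z]=\pi^\estrela(\na_PZ)$ into $(\call_PB)Y=[P,BY]-B[P,Y]$. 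The two mechanisms you isolate are exactly right: $B$ annihilates the vertical curvature term (so its sign convention is immaterial, as you note), and torsion-freeness converts $\pi^\estrela(\na_PZ)-\pi^\estrela[P,Z]$ into $\pi^\estrela(\na_ZP)=B\na^*_{\pi^*Z}P$, while the vertical case reduces to $B\pi^\estrela(\na_PZ)=0=B\na^*_{\pi^\estrela Z}P$; your identifications $\na^*_{\pi^*Z}P=\pi^*(\na_ZP)$ and $\na^*_{\pi^\estrela Z}P=0$ are precisely the pullback-connection facts the paper relies on. What each approach buys: yours is self-contained and makes transparent why no curvature of $M$ survives in the final formula, a fact the paper's citation-based argument leaves hidden inside the quoted proposition; the paper's route is shorter and retains the stronger general identity with the term $-\na^*_{BY}P$, valid for vector fields on $TM$ that are not lifts, of which the proposition is then an immediate corollary.
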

\begin{proof}
 $\na^*$ denotes the pullback connection either to $\pi^*TM$ and to  
$\pi^\estrela TM$ thus giving a metric connection over $TM$. The result 
follows immediately from \cite[Proposition 2.1]{Alb2019}, where it is found 
that for any vector field $P$ over $TM$ we have
$(\call_{P}B)Y=B\na^*_YP-\na^*_{BY}P$. In the present setting, $P$ is a 
horizontal lift, hence a pullback to $SM$ to the horizontal tangent 
subbundle. Since $BY$ is vertical, we clearly have $\na^*_{BY}P=0$.
\end{proof}

The case of Theorem \ref{teo_vgHMI} with $P$ Killing-conformal in ambient 
constant sectional curvature $M$ has the first formulations in 
\cite{Katsurada} and more recently in \cite{Kwong}.

We now apply the theorem to a position vector field $P$ on a oriented Riemannian manifold $M$. Let us recall the definition: $P$ is a $\mathrm{C}^1$ vector field over an open subset ${\calu}\subset M$ for which there is a function $f$ such that $\na_XP=f X$, $\forall X\in\XIS_\calu$.

\begin{teo}[Three generalized Hsiung-Minkowski identities]
\label{teo_HMI}
Let $\iota:N\hookrightarrow M$ be a closed oriented immersed hypersurface of 
the oriented Riemannian $(n+1)$-dimensional $\mathrm{C}^2$ manifold $M$. Let 
${P}$ be a position vector field of $M$ defined on a neighborhood of 
$\iota(N)$. Then we 
have that
\begin{equation}\label{HMzero}
 \int_N(f-\langle {P},\nu\rangle H_{1})\,\vol=0
\end{equation}
and
\begin{equation}\label{HMum}
 \int_N\Bigl(2\binom{n}{2}f H_1-2\binom{n}{2}\langle P,\nu\rangle 
H_2-\ric(P,\nu)+\langle P,\nu\rangle\ric(\nu,\nu)\Bigr)\,\vol=0
\end{equation}
and
\begin{equation}\label{HMdois}
 \begin{split}
& \int_N\Bigl(
 \bigl(\langle P,\nu\rangle\ric(\nu,\nu)-\ric(\nu,P)\bigr)nH_1
 +3\binom{n}{3}fH_2-3\binom{n}{3}\langle P,\nu\rangle H_3+ \Bigr. \\
 &\qquad\qquad \Bigl. +\langle 
P,\nu\rangle\tr{(R(\nu,\na_\cdot\nu)\nu)}+\ric(\na_P\nu,\nu)
 -\tr{(R(P,\na_\cdot\nu)\nu)}\Bigr)\,\vol =0.
  \end{split}
\end{equation}
\end{teo}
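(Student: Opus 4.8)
The plan is to deduce Theorem~\ref{teo_HMI} directly from Theorem~\ref{teo_vgHMI} by evaluating, for a position vector field, the three leading terms
$\frac{1}{i!(n-i)!}\hat{\iota}^*(\alpha_n\circ(\call_PB^{n-i}\wedge1^i))$ that are left uncomputed in the general statement. Every other term in (\ref{vgHMzero}--\ref{vgHMdois}) already has exactly the shape required by (\ref{HMzero}--\ref{HMdois}), so the entire task reduces to a single structural computation, namely that of $\call_PB$.

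First I would compute $\call_PB$ by means of the Proposition immediately preceding this theorem, which states $(\call_PB)Y=B\na^*_YP$ for the horizontal lift of $P$. Since $P$ is a position vector field, $\na_XP=fX$, and the defining property of the pullback connection on the horizontal lift gives $\na^*_YP=\pi^*(\na_{\dx\pi(Y)}P)=(\pi^*f)\,\pi^*(\dx\pi(Y))$, i.e.\ $f$ times the horizontal part of $Y$. Because $B$ annihilates vertical vectors while $B\pi^*X=\pi^\estrela X$, this yields
\begin{equation}
 (\call_PB)Y=B\na^*_YP=(\pi^*f)\,BY,\qquad\text{so that}\qquad \call_PB=(\pi^*f)\,B .
\end{equation}
This is the one genuinely new step; writing $f$ for $\pi^*f$ henceforth, the relation $\call_PB=fB$ is precisely where the position-vector-field hypothesis enters.

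With $\call_PB=fB$ available, I would insert it into the alternating products via the Leibniz rule for $\circ$. Differentiating $B^{n-i}$ produces $n-i$ equal summands, each replacing one factor $B$ by $fB$, whence
\begin{equation}
 \alpha_n\circ(\call_PB^{n-i}\wedge1^i)=(n-i)\,f\,\alpha_n\circ(B^{n-i}\wedge1^i)=(n-i)\,i!\,(n-i)!\,f\,\alpha_i ,
\end{equation}
using the definition (\ref{definition_alphai}). Pulling back by $\hat{\iota}$, invoking (\ref{fundapullbackalphai}) and $\hat{\iota}^*\pi^*f=f_{|N}$, gives
\[
 \frac{1}{i!(n-i)!}\hat{\iota}^*(\alpha_n\circ(\call_PB^{n-i}\wedge1^i))=(n-i)\binom{n}{i}fH_i\,\vol_N=(i+1)\binom{n}{i+1}fH_i\,\vol_N ,
\]
the last equality being the elementary binomial identity $(n-i)\binom{n}{i}=(i+1)\binom{n}{i+1}$.

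Finally I would specialise $i=0,1,2$. For $i=0$ the leading term is $\binom{n}{1}f\,\vol_N=nf\,\vol_N$, so (\ref{vgHMzero}) becomes $\int_N(nf-n\langle P,\nu\rangle H_1)\,\vol=0$, and dividing by $n$ yields (\ref{HMzero}). For $i=1$ the leading term is $2\binom{n}{2}fH_1\,\vol_N$, which inserted into (\ref{vgHMum}) reproduces (\ref{HMum}) verbatim. For $i=2$ the leading term is $3\binom{n}{3}fH_2\,\vol_N$, and substituting into (\ref{vgHMdois}) gives (\ref{HMdois}). The only point needing care is the bookkeeping of the combinatorial factor $n-i$ coming from Leibniz and its conversion, through the binomial identity above, into exactly the coefficients $\binom{n}{1}$, $2\binom{n}{2}$, $3\binom{n}{3}$ of the target formulas; I expect this matching to be the main obstacle, as everything else is a transcription from Theorem~\ref{teo_vgHMI}.
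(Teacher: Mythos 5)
Your proof is correct and takes essentially the same route as the paper's own: both derive $\call_PB=fB$ from the preceding proposition via the pullback connection, apply the Leibniz rule to get $\frac{1}{i!(n-i)!}\,\alpha_n\circ(\call_PB^{n-i}\wedge1^i)=(n-i)f\,\alpha_i$, pull back by $\hat{\iota}$, and use the identity $(n-i)\binom{n}{i}=(i+1)\binom{n}{i+1}$ before substituting into Theorem~\ref{teo_vgHMI} for $i=0,1,2$. The only cosmetic difference is that you make explicit the final division by $n$ in the $i=0$ case, which the paper leaves implicit.
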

\begin{proof}
 First, the function $f$ on $M$ corresponds to an obvious function on $TM$ 
constant along the fibres. Then it follows that
$\na^*_{\pi^*Y}\pi^*P=f\pi^*Y$ and, of course, $\na^*_{\pi^\estrela 
Y}\pi^*P=0$, for all $Y\in TM$. Applying the above proposition $\call_{P}B=f 
B$. And we further conclude by a proved Leibniz rule
\begin{equation*}
   \frac{1}{i!(n-i)!}\,\alpha_n\circ(\call_PB^{n-i}\wedge1^i) 
=(n-i)f\,\alpha_i .
\end{equation*} 
Applying $\hat{\iota}^*$ we find $(n-i)f\binom{n}{i}H_i\,\vol_N$, which is 
the same as $(i+1)\binom{n}{i+1}fH_i\,\vol_N$, and the result follows for 
$i=0,1,2$.
\end{proof}
Two short remarks are in order. Identity \eqref{HMzero}, the case $i=0$,
may be attributed to Hsiung in the general Riemannian setting; however,
the meaning of a position vector field in \cite{Hsiung2} is difficult to 
grasp, being given with respect to an assumed orthogonal frame. According to 
\cite{ChenYano,Katsurada}, the position vector is parallel to the mean 
curvature vector field. Then implying $f$ is always 1, which is not even the 
case for space forms. We further remark that, if the hypersurface has 
boundary, the right hand sides of the identities in Theorems \ref{teo_vgHMI} 
and \ref{teo_HMI}, in respective order of $i=0,1,2$, are $\int_{\partial 
N}\hat{\iota}^*(P\lrcorner\alpha_i)$.

The following result gives some useful identities; the proof is trivial.
\begin{prop}
 Let $P$ be a position vector field and let $\nu$ denote a unit vector field 
on $M$. Then we have the following identities:
\begin{equation}
\dx P^\flat=0\qquad \quad 
\call_P\langle\,,\,\rangle=2f\,\langle\,,\,\rangle\qquad \quad R(\;,\;)P=\dx 
f\wedge1   
\end{equation}
\begin{equation}
 \ric(P,\nu)=-n\,\dx f(\nu) \quad \qquad 
\tr(R(P,\na_\cdot\nu)\nu)=nH_1\,\dx f(\nu) .
\end{equation}
In particular, if $f$ is a constant, then $R(\:,\:)P=0$.
\end{prop}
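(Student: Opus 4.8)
The plan is to read off every identity directly from the defining relation $\na_XP=fX$, using nothing beyond the torsion-freeness and metric compatibility of the Levi-Civita connection and the standard symmetries of $R$. First I would treat the two \emph{first-order} identities. For $\dx P^\flat=0$, I would use that, for a torsion-free connection, $\dx P^\flat(X,Y)=(\na_XP^\flat)(Y)-(\na_YP^\flat)(X)$; since metric compatibility gives $(\na_XP^\flat)(Y)=\langle\na_XP,Y\rangle=f\langle X,Y\rangle$, which is symmetric in $X,Y$, the antisymmetrization vanishes. The conformal identity $\call_P\langle\,,\,\rangle=2f\langle\,,\,\rangle$ follows in the same breath from $(\call_P\langle\,,\,\rangle)(X,Y)=\langle\na_XP,Y\rangle+\langle X,\na_YP\rangle$, each term contributing $f\langle X,Y\rangle$.

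Next I would establish the curvature identity $R(\,,\,)P=\dx f\wedge1$. Expanding $R(X,Y)P=\na_X\na_YP-\na_Y\na_XP-\na_{[X,Y]}P$ and inserting $\na_YP=fY$, $\na_XP=fX$, $\na_{[X,Y]}P=f[X,Y]$, the three second-order terms $f\na_XY$, $f\na_YX$, $f[X,Y]$ cancel by torsion-freeness, leaving $R(X,Y)P=(Xf)Y-(Yf)X$, which is exactly $(\dx f\wedge1)(X,Y)$. The stated corollary is then immediate: if $f$ is constant then $\dx f=0$, so $R(\,,\,)P=0$.

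Finally, both trace formulas are contractions of this curvature identity, and the only point requiring care is the dimension bookkeeping together with the use of the pairing symmetry of $R$ to move $P$ into the third slot, where the formula $R(X,Y)P=(Xf)Y-(Yf)X$ applies. For the Ricci identity I would compute $\ric(\nu,P)=\sum_{j=1}^{n+1}\langle R(e_j,\nu)P,e_j\rangle$ over a full orthonormal frame of the $(n+1)$-dimensional $M$; substituting $R(e_j,\nu)P=(e_jf)\nu-(\nu f)e_j$ yields $\dx f(\nu)-(n+1)\dx f(\nu)=-n\,\dx f(\nu)$, and symmetry of $\ric$ gives $\ric(P,\nu)=-n\,\dx f(\nu)$; the factor $-n$ is precisely where the ambient dimension $n+1$ enters. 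For the last identity I would write $\tr{(R(P,\na_\cdot\nu)\nu)}=\sum_{k=1}^n\langle R(P,\na_{e_k}\nu)\nu,e_k\rangle$, use the pairing symmetry to rewrite each summand as $\langle R(\nu,e_k)P,\na_{e_k}\nu\rangle$, and substitute $R(\nu,e_k)P=(\nu f)e_k-(e_kf)\nu$. The $\nu$-term drops out since $\langle\nu,\na_{e_k}\nu\rangle=\tfrac12 e_k\langle\nu,\nu\rangle=0$, while the surviving term is $(\nu f)\langle e_k,\na_{e_k}\nu\rangle$; summing and using $\sum_k\langle e_k,\na_{e_k}\nu\rangle=\tr{(\na_\cdot\nu)}=nH_1$ produces $nH_1\,\dx f(\nu)$. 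There is no genuine obstacle here — the computation is elementary — so the only thing I would guard against is keeping the curvature-symmetry conventions of the paper consistent so that $P$ reliably lands in the slot governed by $R(X,Y)P=(Xf)Y-(Yf)X$.
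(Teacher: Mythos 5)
Your proposal is correct, and since the paper itself offers no argument (it simply declares the proof trivial), your computation supplies exactly the standard elementary derivation that is intended: everything follows from $\na_XP=fX$, torsion-freeness, metric compatibility, and the pair symmetry of $R$. In particular you get the two delicate points right — the factor $-n$ in $\ric(P,\nu)=-n\,\dx f(\nu)$ coming from tracing over the $(n+1)$-dimensional ambient frame, and the vanishing of the $\langle\nu,\na_{e_k}\nu\rangle$ terms (together with the harmlessness of the $\nu$-direction in the trace, which contributes $\langle R(\nu,\nu)P,\na_\nu\nu\rangle=0$) in the identity $\tr(R(P,\na_\cdot\nu)\nu)=nH_1\,\dx f(\nu)$ — all consistent with the paper's curvature conventions.
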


\begin{coro}
 Under the hypothesis of Theorem \ref{teo_HMI}, suppose moreover that $M$ is 
an Einstein manifold. Then we have that
\begin{equation}\label{HM-E-um}
 \int_N\bigl(f H_1-\langle P,\nu\rangle H_2\bigr)\,\vol=0
\end{equation}
and
\begin{equation}\label{HM-E-dois}
 \int_N\Bigl(3\binom{n}{3}fH_2-3\binom{n}{3}\langle P,\nu\rangle H_3+ 
\langle P,\nu\rangle\tr{(R(\nu,\na_\cdot\nu)\nu)}
 -\tr{(R(P,\na_\cdot\nu)\nu)}\Bigr)\,\vol =0.
\end{equation}
\end{coro}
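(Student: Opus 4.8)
The plan is to substitute the Einstein condition directly into the two identities \eqref{HMum} and \eqref{HMdois} of Theorem~\ref{teo_HMI} and to observe that all the resulting simplifications are purely algebraic and pointwise, so that the corollary drops out with no further integration needed. Recall that $M$ being Einstein means $\ric=\lambda\langle\,,\,\rangle$ for some function $\lambda$ on $M$; the argument below uses only this pointwise relation, and in particular does not require $\lambda$ to be constant.

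First I would treat \eqref{HMum}. Under the Einstein condition $\ric(P,\nu)=\lambda\langle P,\nu\rangle$ and $\ric(\nu,\nu)=\lambda$, the latter because $\nu$ is a unit vector. Hence the two Ricci correction terms cancel pointwise,
\[ -\ric(P,\nu)+\langle P,\nu\rangle\ric(\nu,\nu)=-\lambda\langle P,\nu\rangle+\lambda\langle P,\nu\rangle=0, \]
so the integrand of \eqref{HMum} reduces to $2\binom{n}{2}\bigl(fH_1-\langle P,\nu\rangle H_2\bigr)$. Dividing by the nonzero constant $2\binom{n}{2}$ yields \eqref{HM-E-um}.

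Next I would treat \eqref{HMdois} in the same spirit, where two terms must be removed. The leading Ricci term vanishes pointwise by the identical cancellation, now carrying the factor $nH_1$,
\[ \bigl(\langle P,\nu\rangle\ric(\nu,\nu)-\ric(\nu,P)\bigr)nH_1=\bigl(\lambda\langle P,\nu\rangle-\lambda\langle P,\nu\rangle\bigr)nH_1=0. \]
For the remaining term $\ric(\na_P\nu,\nu)$ the Einstein condition gives $\ric(\na_P\nu,\nu)=\lambda\langle\na_P\nu,\nu\rangle$, and since $\nu$ is a unit normal one has $\langle\na_P\nu,\nu\rangle=\tfrac12\na_P\langle\nu,\nu\rangle=0$, whence $\ric(\na_P\nu,\nu)=0$. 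After these two cancellations the integrand of \eqref{HMdois} is exactly that of \eqref{HM-E-dois}, which completes the proof.

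There is no genuine obstacle here: every step is an algebraic simplification carried out under the integral sign. The only point worth flagging is that the vanishing of $\ric(\na_P\nu,\nu)$ combines the Einstein hypothesis with the elementary fact $\langle\na_X\nu,\nu\rangle=0$, valid for any unit field $\nu$ and any $X$. One should also note that, because all cancellations are pointwise, the constancy of $\lambda$ (automatic when $\dim M\ge 3$ by Schur's lemma) plays no role in the derivation.
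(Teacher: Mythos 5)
Your proposal is correct and follows essentially the same route as the paper's proof: substitute the Einstein condition into \eqref{HMum} and \eqref{HMdois}, observe the pointwise cancellation $\langle P,\nu\rangle\ric(\nu,\nu)-\ric(P,\nu)=0$, and kill $\ric(\na_P\nu,\nu)$ via $\langle\na_P\nu,\nu\rangle=0$ for the unit field $\nu$ (the paper's ``clearly $\ric(\na\nu,\nu)=0$''). Your added remark that only the pointwise proportionality $\ric=\lambda\langle\,,\,\rangle$ is used, so constancy of $\lambda$ is irrelevant, is a small but accurate observation beyond the paper's phrasing ``constant multiple of the metric.''
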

\begin{proof}
 Suppose $\ric$ is a constant multiple of the metric. Then clearly 
$\ric(\na\nu,\nu)=0$. The result follows by simplification of \eqref{HMum} 
and \eqref{HMdois}.
\end{proof}

Thus we may say the \textit{second} Hsiung-Minkowski identity holds true for 
any Einstein metric.

The \textit{third} formula of Hsiung-Minkowski follows immediately for 
constant sectional curvature ambient manifold. Indeed, if we have 
$R(X,Y)Z=c(\langle Y,Z\rangle X-\langle X,Z\rangle Y)$, for all $X,Y,Z\in TM$, for some 
constant $c$, then $\tr{R(P,\na_\cdot\nu)\nu}=-c\langle P,\nu\rangle nH_1= \langle 
P,\nu\rangle\tr{(R(\nu,\na_\cdot\nu)\nu)}$. And therefore, for every 
hypersurface,
\begin{equation}\label{HM-CSC-tres}
  \int_N\bigl(f H_2-\langle P,\nu\rangle H_3\bigr)\,\vol =0.
\end{equation}

Every warped product metric admits a position vector field, cf.
\cite[Corollary 4.1]{Alb2019} or \cite{GuanLi}. Hence the new results in 
this section have a large domain of application.

\vspace{4mm}
\begin{center}
 \textbf{3. Constant mean curvatures $H_1$ and $H_2$ on Einstein $M$}
\end{center}
\setcounter{section}{3}

The applications of Hsiung-Minkowski formulas in space forms are abundant.
We refer the reader to \cite{KwongLeePyo} for an interesting short survey on 
the subject. Now the use of the identities 
(\ref{HMzero},\ref{HM-E-um},\ref{HM-E-dois}) within Einstein manifolds has 
good expectations.

A particular case where the following result applies is that of 
isoparametric hypersurfaces, i.e. all principal curvatures constant.
\begin{teo}
Let $M$ be an Einstein manifold admiting a position vector field $P$. Let $N$ 
be a closed hypersurface of $M$ and suppose $H_1,H_2$ are 
constant. Then:
\begin{enumerate}
 \item[(i)] If $\int_Nf\,\vol=0$ and 
 $\int_N\langle P,\nu\rangle\,\vol\neq0$, then $H_1=H_2=0$ and $N$ is flat;
 \item[(ii)] If $\int_Nf\,\vol\neq0$, then none of the previous vanish 
and $N$ is totally umbilic and non-flat.
\end{enumerate}
\end{teo}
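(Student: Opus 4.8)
The plan is to feed the hypothesis that $H_1,H_2$ are constant into the two lowest-degree identities \eqref{HMzero} and \eqref{HM-E-um}, turning them into a linear system in the two averages $a=\int_N f\,\vol$ and $b=\int_N\langle P,\nu\rangle\,\vol$. Pulling the constants outside the integrals, \eqref{HMzero} becomes $a=H_1 b$ and \eqref{HM-E-um} becomes $H_1 a=H_2 b$; eliminating $a$ gives $(H_1^2-H_2)\,b=0$. In case (i), where $a=0$ and $b\neq0$, the first equation forces $H_1=0$ and then the second forces $H_2=0$. In case (ii), where $a\neq0$, one cannot have $b=0$ (else $a=H_1b=0$), so $b\neq0$ and hence $H_2=H_1^2$; moreover $H_1\neq0$, since $H_1=0$ would give $H_2=0$ and again $a=H_1b=0$. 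Thus in (ii) none of $H_1,H_2,a,b$ vanishes, as claimed.

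Next I would translate these curvature values into pointwise information on the shape operator $A=\na\nu$ by way of Newton's identity $\sum_{i<j}(a_i-a_j)^2=n^2(n-1)(H_1^2-H_2)$, where $a_1,\dots,a_n$ are the principal curvatures. In case (i) the relation $\sum_i a_i^2=(nH_1)^2-n(n-1)H_2=0$ forces $A=0$, so $N$ is totally geodesic; in case (ii) the equality $H_2=H_1^2$ forces all $a_i$ equal, so $N$ is totally umbilic with $A=H_1\,\mathrm{Id}$.

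I would then read off the scalar curvature of $N$ by tracing the contracted Gauss equation
\[
 \ric^N(X,Y)=\ric(X,Y)-\langle R(\nu,X)Y,\nu\rangle+nH_1\langle AX,Y\rangle-\langle A^2X,Y\rangle ,
\]
using the Einstein condition $\ric=\lambda\langle\,,\,\rangle$; since the radial term traces to $\ric(\nu,\nu)=\lambda$, this yields $\mathrm{scal}^N=(n-1)(\lambda+nH_1^2)$. To fix $\lambda$ I would exploit the position field: the identities $\ric(P,\cdot)=-n\,\dx f$ and $R(\,,\,)P=\dx f\wedge1$ combined with Einstein give $\grad f=-\tfrac{\lambda}{n}P$ and hence $\mathrm{Hess}\,f=-\tfrac{\lambda}{n}f\langle\,,\,\rangle$. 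Restricting this Hessian to the umbilic $N$ and tracing produces $\Delta^N f=-\lambda\bigl(f+H_1\langle P,\nu\rangle\bigr)$, so integrating over the closed $N$ gives $\lambda(a+H_1b)=0$. In case (ii), $a=H_1b\neq0$ makes $a+H_1b=2a\neq0$, whence $\lambda=0$ and $\mathrm{scal}^N=n(n-1)H_1^2>0$; so $N$ is totally umbilic and genuinely non-flat, completing (ii).

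The step I expect to be the main obstacle is the flatness assertion in case (i). There the divergence argument degenerates: $H_1=0$ and $a=0$ make $\lambda(a+H_1b)=0$ automatic, so the vanishing of the Einstein constant — equivalently $\mathrm{scal}^N=(n-1)\lambda=0$ — is not delivered by the integral machinery alone. The delicate point is to exclude the positively-curved alternative, since a totally geodesic $N$ carrying the tangential field $P^\top=-\tfrac{n}{\lambda}\grad^N f$ would satisfy the Obata-type equation $\mathrm{Hess}^N f=-\tfrac{\lambda}{n}f\langle\,,\,\rangle$ and could a priori be a round sphere with $\lambda>0$. To close the gap I would try to use the hypothesis $b=\int_N\langle P,\nu\rangle\neq0$ to show that $P$ is in fact normal along $N$, so that $f|_N\equiv0$ and $P^\top\equiv0$; feeding $P=\langle P,\nu\rangle\nu$ into $R(\nu,X)P=-\tfrac{\lambda}{n}\langle P,\nu\rangle X$ then evaluates the radial curvature as $\langle R(\nu,X)Y,\nu\rangle=\tfrac{\lambda}{n}\langle X,Y\rangle$, collapsing $\ric^N$ to $\tfrac{(n-1)\lambda}{n}\langle\,,\,\rangle$; establishing $\lambda=0$ at this stage and upgrading Ricci-flatness to $R^N=0$ for the totally geodesic $N$ is precisely where the real work lies.
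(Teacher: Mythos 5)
Your first two paragraphs are precisely the paper's proof. The paper also treats \eqref{HMzero} and \eqref{HM-E-um} with constant $H_1,H_2$ as the linear system $a=H_1b$, $H_1a=H_2b$ (giving $H_1=H_2=0$ in case (i), and $H_2=H_1^2$ with nothing vanishing in case (ii)), and then uses the same Newton-type identity $\sum_{i<j}(a_i-a_j)^2=n^2(n-1)(H_1^2-H_2)$ to force all principal curvatures equal, hence $A=H_1\,\mathrm{Id}$ in case (ii) and $A=0$ in case (i). Crucially, in the paper ``flat''/``non-flat'' means exactly this extrinsic conclusion: its proof of (i) ends with ``all $a_i=0$'', i.e.\ $N$ totally geodesic, and ``non-flat'' in (ii) just records $a_i=H_1\neq0$. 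So by the end of your second paragraph you have already reproduced the complete intended argument.

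The remaining two paragraphs, where you try to upgrade to \emph{intrinsic} (non-)flatness, contain a genuine error. The Laplacian formula has the wrong sign: with $A=\na\nu$ and $\na^N_XY=\na_XY+\langle AX,Y\rangle\nu$, restricting $\mathrm{Hess}\,f=-\tfrac{\lambda}{n}f\langle\,,\,\rangle$ to $N$ gives $\Delta^N f=-\lambda\bigl(f-H_1\langle P,\nu\rangle\bigr)$, not $-\lambda\bigl(f+H_1\langle P,\nu\rangle\bigr)$; integrating then yields $\lambda(a-H_1b)=0$, which is vacuous — it is just \eqref{HMzero} again — so the step delivering $\lambda=0$ fails. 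Indeed the conclusion $\lambda=0$ in case (ii) is false: a geodesic sphere of radius $r_0\neq\pi/2$ in the round $S^{n+1}$ (Einstein with $\lambda=n$, position field $P=\sin r\,\partial_r$, $f=\cos r$) is closed, umbilic with constant $H_1=\cot r_0$, $H_2=H_1^2$, and has $\int_Nf\,\vol=\cos r_0\,\vol(N)\neq0$, satisfying all hypotheses of (ii). Your suspicion about case (i) is likewise well founded but unfixable: the equator $r_0=\pi/2$ is totally geodesic with $f|_N=0$ (so $\int_Nf\,\vol=0$) and $\langle P,\nu\rangle=1$, yet is intrinsically a round sphere — so intrinsic flatness is simply not a consequence of the hypotheses, confirming that the theorem's ``flat'' must be read as vanishing second fundamental form, the statement your second paragraph already proves.
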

\begin{proof}
 We start by the second case. Let $a_1,\ldots,a_n$ denote the principal 
curvatures of $N$. We have $H_1,H_2$ both non-zero and $H_1/H_0=H_2/H_1$ by 
(\ref{HMzero},\ref{HM-E-um}). In other words, $H_1^2=H_2$ or
 \[  \frac{(a_1+\cdots+a_n)^2}{n^2}=\frac{2}{n(n-1)}\sum_{i<j}a_ia_j . \]
 Equivalently,
 \[ (n-1)(a_1^2+\cdots+a_n^2)-2\sum_{i<j}a_ia_j=0  \]
 or simply 
 \[  \sum_{i<j}(a_i-a_j)^2=0 , \]
 giving the result. For the first case, we have that $H_1=H_2=0$. Then we 
notice the above deductions for case (i) still hold, so we must have all 
$a_i=0$.
 \end{proof}

\vspace{7mm}

The author acknowledges a helpful email conversation with J.~Roth from 
U.~Gustave Eiffel, Paris, regarding some references.

Parts of this article were written while the author was a visitor at Chern 
Institute of Mathematics, Tianjin, P.~R.~China. He warmly thanks CIM for the 
excellent conditions provided and for the opportunity to visit such an
inspiring campus of the U.~Nankai.

\vspace*{10mm}

\vspace*{8mm}

\textsc{R. Albuquerque}\ \ \ \textbar\ \ \ 
{\texttt{rpa@uevora.pt}}\\
Departamento de Matem\'{a}tica da Universidade de \'{E}vora\\
Centro de Investiga\c c\~ao em Mate\-m\'a\-ti\-ca e Aplica\c c\~oes\\
Rua Rom\~ao Ramalho, 59, 671-7000 \'Evora, Portugal\\
Research leading to these results has received funding from Funda\c c\~ao 
para a Ci\^encia e a Tecnologia (UID/MAT/04674/2013).

%\end{color}

\end{document}